\documentclass{amsart}

\usepackage[T1]{fontenc}
\usepackage{lmodern}
\usepackage{amsmath,amssymb,amsthm}
\usepackage{mathtools}
\usepackage[colorlinks=true,linkcolor=blue,citecolor=blue,urlcolor=blue]{hyperref}

\newtheorem{theorem}{Theorem}
\newtheorem{conjecture}{Conjecture}
\newtheorem{lemma}[theorem]{Lemma}
\theoremstyle{definition}

\theoremstyle{remark}

\newcommand{\T}{\mathcal T}
\newcommand{\I}{\mathcal I}
\newcommand{\F}{\mathcal F}
\newcommand{\N}{\mathbb N}
\newcommand{\wt}{\operatorname{wt}}

\title{AN ERD\H{O}S--KO--RADO THEOREM FOR TILINGS}
\author{CASEY TOMPKINS}
\email{casey.tompkins@renyi.hu}
\date{}

\begin{document}

\begin{abstract}
We prove an Erd\H{o}s--Ko--Rado type extremal result for tilings of a $1 \times n$ chessboard by tiles whose lengths belong to a set $\Lambda$.
Two tilings are said to intersect if they contain a tile spanning the same set of squares.
We prove that if $1\in\Lambda$, then the maximum size of an intersecting family of tilings is attained by the set of all tilings containing a fixed singleton tile at one of its ends.
This result generalizes a theorem of Butler, Horn and Tressler, which is equivalent to the case $\Lambda=\{1,2\}$.
\end{abstract}

\maketitle
\vspace{-2em}
\section{Introduction and Notation}

The classical theorem of Erd\H{o}s, Ko and Rado~\cite{EKR1961} determines the maximum size of a pairwise intersecting family of $k$-element subsets of an $n$-element set in the nontrivial range $n\geq 2k$.
The same paper also gives the sharp bound for an arbitrary intersecting family of subsets of an $n$-element set.
Following this initial work, analogues have been proved in many other settings, including finite vector spaces~\cite{Hsieh1975}, multisets~\cite{Anderson1988}, set partitions~\cite{MeagherMoura2005, KuRenshaw2008,DyckMeagher2015}, integer partitions~\cite{Borg2014,Borg2016}, permutations~\cite{DezaFrankl1977}, graph families~\cite{SimonovitsSos1978} and independent sets in graphs~\cite{HolroydTalbot2005}.

Butler, Horn and Tressler~\cite{ButlerHornTressler2010} introduced a notion of intersection for families of domino tilings of chessboards of fixed dimensions.
They say that two domino tilings intersect if they both contain a domino spanning the same two squares of the chessboard, and they investigate the maximum size of an intersecting family of domino tilings.
For a $2\times n$ chessboard, they prove that the family of tilings containing one fixed vertical domino in an end column is optimal.
Since the total number of domino tilings of a $2\times n$ chessboard is the $(n+1)$st Fibonacci number $F_{n+1}$, their result gives the sharp upper bound $F_n$.
They also consider the analogous problem for $3\times 2n$ chessboards and prove a sharp bound on the maximum size of an intersecting family of tilings.

The $2\times n$ chessboard result can be reformulated in one dimension. A vertical domino in a column corresponds to a tile of length $1$ on a $1\times n$ board, while a pair of horizontal dominoes occupying two adjacent columns corresponds to a tile of length $2$. In this setting, we say two tilings intersect if they share a tile spanning the same positions. The result of Butler, Horn and Tressler~\cite{ButlerHornTressler2010} implies that an intersecting family of such tilings has size at most $F_n$.

It is natural to consider this problem for general tile sets, and we now make this precise. Let $\Lambda\subseteq\N$ be a set of allowed tile lengths.
For a positive integer $n$ let $[n]=\{1,2,\ldots,n\}$.
A $\Lambda$-tile in $[n]$ is an interval of the form $[i+1,i+\lambda]$, where $0\leq i\leq n-\lambda$ and $\lambda\in\Lambda$.
A $\Lambda$-tiling of $[n]$ is a partition of $[n]$ into $\Lambda$-tiles.
Equivalently, it is a word
\[
    T=(x_1,x_2,\ldots,x_k),
\]
with $x_i\in\Lambda$ and $x_1+x_2+\cdots+x_k=n$. The entry $x_i$ corresponds to the interval $[p_{i-1}+1,p_i]$, where $p_i=x_1+\cdots+x_i$ and $p_0=0$.

Let $\T_n^\Lambda$ be the set of all $\Lambda$-tilings of $[n]$.
For a tiling $T$, write $\I(T)$ for the set of tiles in $T$.
We say two tilings $T,U\in\T_n^\Lambda$ intersect if
\[
    \I(T)\cap\I(U)\ne\varnothing.
\]
A family $\F\subseteq\T_n^\Lambda$ is intersecting if every two distinct tilings in $\F$ intersect.

Let
\[
    a_m^\Lambda \coloneqq |\T_m^\Lambda|.
\]
We use the conventions $a_0^\Lambda=1$ and $a_m^\Lambda=0$ for $m<0$.
Then, for $m\geq 1$,
\[
    a_m^\Lambda=\sum_{\lambda\in\Lambda} a_{m-\lambda}^\Lambda.
\]
If $1\in\Lambda$, the family
\[
    A_n=\{T\in\T_n^\Lambda:[1,1]\in\I(T)\}
\]
has size $a_{n-1}^\Lambda$, and the same is true for the family of all tilings containing the singleton tile $[n,n]$.
We call these two families \emph{endpoint stars}. Our main result is the following.

\begin{theorem}\label{thm:main}
Let $n\geq 1$, let $\Lambda\subseteq\N$ with $1\in\Lambda$ and let $\F\subseteq\T_n^\Lambda$ be an intersecting family.
Then
\[
    |\F|\leq a_{n-1}^\Lambda.
\]
\end{theorem}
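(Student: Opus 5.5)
The plan is to reduce Theorem~\ref{thm:main} to a partition statement, in the spirit of Katona's cycle method. I would show that $\T_n^\Lambda$ can be partitioned into exactly $a_{n-1}^\Lambda$ classes such that any two distinct tilings in the same class share no tile. An intersecting family $\F$ then meets each class in at most one tiling, which gives $|\F|\le a_{n-1}^\Lambda$ at once.

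The natural target is a partition in which each class contains exactly one tiling with $[1,1]\in\I(T)$, that is, one member of the endpoint star $A_n$. This forces the number of classes to be $|A_n|=a_{n-1}^\Lambda$. It then remains to attach every tiling whose first tile has length $\lambda\ge 2$ to a unique class, keeping all members of a class pairwise tile-disjoint.

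I would build the classes by induction on $n$ using the recursion $a_m^\Lambda=\sum_{\lambda\in\Lambda}a_{m-\lambda}^\Lambda$. For each $\lambda\in\Lambda$, write $\T_n^{(\lambda)}$ for the tilings whose first tile is $[1,\lambda]$. Each such tiling is $[1,\lambda]$ followed by a tiling of a board of length $n-\lambda$, so $|\T_n^{(\lambda)}|=a_{n-\lambda}^\Lambda$. Two tilings from different sets $\T_n^{(\lambda)}$, $\T_n^{(\mu)}$ automatically differ in their first tile, so disjointness only has to be arranged on the remaining tiles. The step I would try first is this:
\begin{itemize}
\item By induction, the tilings of $[2,n]$ (a board of length $n-1$) split into $a_{n-2}^\Lambda$ disjointness classes, each with one representative starting with $[2,2]$.
\item Prefixing $[1,1]$ turns these into $a_{n-1}^\Lambda$ singleton classes, one for each member of $A_n$.
\item Each $T\in\T_n^{(\lambda)}$ with $\lambda\ge2$ is then attached to a member of $A_n$ via a local modification near the left end. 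For example, replace the first tile $[1,\lambda]$ by $[1,1]$ and re-tile $[2,\lambda]$ so that every interior cut point of $T$ is moved by one unit.
\end{itemize}
The idea behind the last step is that two tilings whose cut sets in $[n-1]$ are "shifted" relative to each other cannot share a tile. A common tile would need two consecutive cut points in common, counting $0$ and $n$ as cuts.

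The main obstacle is making this attachment injective enough and preserving disjointness within a class. Several tilings with long first tiles may want to attach to the same representative, and they must then be pairwise disjoint as well. Shifting cut points also changes the lengths of the tiles at the two ends, and these new lengths must lie in $\Lambda$. This is exactly where the hypothesis $1\in\Lambda$ should enter, since the length-$1$ tile is always available to absorb a defect of one square.

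If a clean explicit partition proves elusive, my fallback is a weighting argument that needs only a fractional version. I would seek nonnegative weights on the disjointness cliques (sets of pairwise tile-disjoint tilings) such that every tiling has total weight at least $1$ and the total weight is $a_{n-1}^\Lambda$. I would verify this by induction on $n$, splitting according to the first tile and using the recursion for $a_m^\Lambda$.
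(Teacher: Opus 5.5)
\textbf{There is a genuine gap: the step that carries the whole proof is never supplied.} Your reduction is valid: a partition of $\T_n^\Lambda$ into $a_{n-1}^\Lambda$ classes of pairwise tile-disjoint tilings would give the bound. But that partition is a strictly stronger statement than the theorem. For $\Lambda=\{1,2\}$ it is exactly the nontrivial Butler--Horn--Tressler injection, and the proposal never constructs it for general $\Lambda$.

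\emph{The induction does no work.} Once $[1,1]$ is prefixed, all the inductive classes intersect, and you fall back to $a_{n-1}^\Lambda$ singleton classes. So the inductive hypothesis is never used, and everything rests on the attachment step, which you leave open.

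\emph{The attachment you sketch cannot work.} Tile-disjointness means no tile of $T$ may survive, so a modification local to the left end is ruled out. In fact the $[n,n]$-star also has size $a_{n-1}^\Lambda$, so every class must contain exactly one tiling containing $[n,n]$. The right end is therefore constrained just as much as the left. The global version, shifting all interior cut points by one, also fails in two ways.
\begin{itemize}
\item It changes the lengths of the end tiles. For $\Lambda=\{1,3\}$ and $T=(3,3)$, neither shift gives a $\Lambda$-tiling: you get $(1,1,4)$ or $(1,3,2)$.
\item It does not give disjointness. For $\Lambda=\{1,2\}$ and $T=(2,1,1,1)$, the two shifts give $(1,1,1,2)$ and $(1,2,1,1)$. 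These share $[3,3]$, resp.\ $[4,4]$ and $[5,5]$, with $T$. The only tile-disjoint partner of $T$ in $A_5$ is $(1,2,2)$, which is not a shift of $T$.
\end{itemize}

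\emph{The fractional fallback is not easier.} By LP duality it is a fractional strengthening of the theorem itself, and you propose no weights for it.

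\textbf{The missing idea is compression, which lets the paper avoid tile-disjointness altogether.}
\begin{itemize}
\item The operation $C_I$ splits a heavy tile $I$ into singletons unless the result is already in the family. It preserves size and the intersecting property (Lemma~\ref{lem:compression}).
\item Iterating gives a family $\F^*$ closed under such splittings. Any two members of $\F^*$ then share a singleton tile (Lemma~\ref{lem:singleton-intersecting}).
\item After this, one only needs an injection $\Phi:B_n\to A_n$ with $\sigma(T)\cap\sigma(\Phi(T))=\varnothing$. The pair $T,\Phi(T)$ may share heavy tiles, and this freedom is what makes a construction possible.
\item The paper builds $\Phi$ block by block. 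Each block $W1^{t}$ is replaced by $1^{s}V$ with $s\le\wt(W)$, using Hall's theorem and the sliding-sum bound $\sum_{j=a}^{a+m-1}r_j\ge\sum_{j=1}^{m}r_j$ for the heavy-word counts. A separate injection $R_M\to E_M$ handles a terminal heavy block.
\end{itemize}
Your plan has no substitute for either the compression or this construction. The obstacle you name yourself, coinciding tiles within a class, is precisely what compression removes.
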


When $\Lambda=\{1,2\}$, the numbers $a_n^\Lambda$ are the Fibonacci numbers $F_{n+1}$.
Therefore Theorem~\ref{thm:main} gives $|\F|\leq F_n$, which is equivalent to the $2\times n$ tilings theorem of Butler, Horn and Tressler~\cite{ButlerHornTressler2010}.

Equality in Theorem~\ref{thm:main} is attained by the endpoint stars. We remark that other families also attain equality. For example, when $\Lambda=\{1,2\}$ and $n$ is even, one can start with the $[1,1]$ endpoint star and replace $(1,1,2^{\frac{n-2}{2}})$ by $(2,1^{n-2})$, yielding another intersecting family of the same size. Here and elsewhere we use exponents to denote repeated copies of an entry in the word, and sometimes we simply use a concatenated list of symbols in place of the vector notation.

We conjecture that an analogue of Theorem~\ref{thm:main} for arbitrary tile sets also holds if $n$ is sufficiently large.

\begin{conjecture}\label{conj:main}
Let $\Lambda\subseteq\N$ be finite and nonempty and let $n$ be sufficiently large. Set $\lambda_0 = \min \Lambda$ and let $\F\subseteq\T_n^\Lambda$ be an intersecting family.
Then
\[
    |\F|\leq a_{n-\lambda_0}^\Lambda.
\]
\end{conjecture}

The proof by Butler, Horn and Tressler~\cite{ButlerHornTressler2010} (in our terminology) involved constructing in a block-wise fashion an injective map $\Phi$ from the set of tilings beginning with $2$ to the set of tilings beginning with $1$, such that, for each $T$, the tilings $T$ and $\Phi(T)$ are disjoint. Once such a map is constructed, their desired bound is immediate. 

The proof of Theorem~\ref{thm:main} proceeds by applying a compression argument which reduces the problem to considering intersections of the singleton tiles. This compression argument is similar to one used by Ku and Renshaw~\cite{KuRenshaw2008} for intersecting families of set partitions. We then construct a suitable injective map into the set of tilings beginning with a singleton tile to obtain the required upper bound. The proof of Theorem~\ref{thm:main} is given in Section~\ref{mainproof}.

\section{Proof of Theorem~\ref{thm:main}} \label{mainproof}
Fix a set of allowed tile lengths $\Lambda$ containing $1$. Let $I\subseteq[n]$ be an interval of length $d\in\Lambda$, where $d\geq 2$.
For a tiling $T$ with $I\in\I(T)$, write $T'$ for the tiling obtained from $T$ by replacing the single tile $I$ by~$d$ singleton tiles.
For a family $\F\subseteq\T_n^\Lambda$ and $T\in\F$, define
\[
    c_{\F,I}(T)=
    \begin{cases}
    T', & \text{if } I\in\I(T)\text{ and }T'\notin\F,\\
    T, & \text{otherwise,}
    \end{cases}
\]
and let
\[
    C_I(\F)=\{c_{\F,I}(T):T\in\F\}.
\]

\begin{lemma}\label{lem:compression}
If $\F\subseteq\T_n^\Lambda$ is intersecting, then $|C_I(\F)|=|\F|$ and $C_I(\F)$ is intersecting.
\end{lemma}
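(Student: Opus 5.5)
The plan is the usual shifting argument, split into an injectivity part and an intersection part. Throughout I use one structural fact. If $I\in\I(T)$, every other tile of $T$ is disjoint from $I$. So $T'$ consists of the tiles of $T$ other than $I$, together with the $d$ singletons $[j,j]$ for $j\in I$. Moreover $T$ is recovered from $T'$ by merging these singletons back into $I$. Hence $T\mapsto T'$ is injective on tilings containing $I$, and $T'\neq T$, since $T'$ does not contain $I$ when $d\ge 2$.

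\emph{Size.} I will show that $c_{\F,I}$ is injective on $\F$. Suppose $c_{\F,I}(T)=c_{\F,I}(U)$ with $T\neq U$. Both images cannot be unchanged, so say $c_{\F,I}(T)=T'\notin\F$.
\begin{itemize}
\item If $U$ is unchanged, then $U=T'\notin\F$, which contradicts $U\in\F$.
\item If $U$ is changed, then $T'=U'$, and the recovery property gives $T=U$, a contradiction.
\end{itemize}
Therefore $|C_I(\F)|=|\F|$.

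\emph{Intersecting.} Take distinct $A=c_{\F,I}(T)$ and $B=c_{\F,I}(U)$. Then $T\neq U$, so $T$ and $U$ share some tile $J$. I split into three cases.
\begin{itemize}
\item \textbf{Neither changed.} Then $A=T$ and $B=U$ share $J$, and we are done.
\item \textbf{Both changed.} Then $A=T'$ and $B=U'$ both contain the singleton $[j,j]$ for every $j\in I$, so they intersect.
\item \textbf{Exactly one changed}, say $A=T'$ and $B=U$.
  \begin{itemize}
  \item If $J\neq I$, then $J$ is a tile of $T$ other than $I$. So $J$ survives in $T'$ and $A,B$ share $J$.
  \item If $J=I$, then $U$ contains $I$ yet was not changed, so by definition $U'\in\F$. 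Also $U'\neq T$, because $T$ contains $I$ and $U'$ does not. Since $\F$ is intersecting, $T$ and $U'$ share a tile $K$. As $K\in\I(U')$, we have $K\neq I$, so $K$ is a tile of $T$ disjoint from $I$ and hence $K\in\I(T')$. On the other hand, a tile of $U'$ disjoint from $I$ is a tile of $U$, so $K\in\I(U)$. Thus $A=T'$ and $B=U$ share $K$.
  \end{itemize}
\end{itemize}

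The only delicate step is the sub-case $J=I$ with exactly one tiling changed. There the definition of the compression, keeping $U$ only when $U'$ is already in $\F$, is exactly what supplies the auxiliary tiling $U'$ through which the common tile is found. The remaining steps are bookkeeping with the structural fact above.
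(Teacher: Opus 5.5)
Your proof is correct and takes essentially the same route as the paper. Injectivity comes from recovering $T$ from $T'$ together with the fact that unchanged images lie in $\F$ while changed ones do not; intersection comes from using the auxiliary tiling $U'\in\F$ when the only shared tile is $I$. The only differences are cosmetic: you argue directly by splitting on the shared tile $J$ rather than by contradiction, and you make explicit the checks that the paper leaves implicit, namely $U'\neq T$ and that a tile of $U'$ disjoint from $I$ is a tile of $U$.
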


\begin{proof}
To prove $|C_I(\F)|=|\F|$, it is sufficient to show $c_{\F,I}:\F\to C_I(\F)$ is injective. 
Suppose $c_{\F,I}(T) = c_{\F,I}(U)$ for $T,U\in \F$.  If both $T$ and $U$ are left fixed by $c_{\F,I}$, then clearly $T=U$.  If both $T$ and $U$ are changed by $c_{\F,I}$, we have $T'=U'$ and so $T=U$.  It cannot be that exactly one of $T$ and $U$ is changed by $c_{\F,I}$ since the image of an unchanged tiling is in $\F$ and the image of a changed tiling is outside~$\F$. Thus $c_{\F,I}$ is injective.

Suppose now that $X,Y\in C_I(\F)$ are disjoint.
If $X,Y\notin\F$, then $X=A'$ and $Y=B'$ for some $A,B\in\F$ containing $I$, so $X$ and $Y$ share the singleton subtiles of $I$, a contradiction.
If $X,Y\in\F$, this contradicts that $\F$ is intersecting.
Thus, without loss of generality, $X=A'\notin\F$ for some $A\in\F$ containing $I$, while $Y\in\F$.

Since $A$ and $Y$ intersect but $A'=X$ is disjoint from $Y$, their only possible common tile is $I$.
Thus $\{I\}=\I(Y) \cap \I(A)$.
As $Y\in C_I(\F)\cap\F$ and $I\in\I(Y)$, we must have $Y'\in\F$.
Now $A$ and $Y'$ intersect since they are both in $\F$.  This is a contradiction since the unique tile common to $A$ and $Y$ is $I$ but $Y'$ contains singleton tiles in place of $I$.
\end{proof}

Starting from an intersecting family $\mathcal G_0$, repeatedly apply compressions of the above form, over intervals $I$ of length $d\in\Lambda$ with $d\geq 2$, whenever they change the current family. At each step the size of the family is unchanged, while $\sum_{T\in\mathcal G}|\I(T)|$, where $\mathcal G$ denotes the current family, strictly increases and is bounded above by $n|\mathcal G_0|$. Hence the process terminates.

By Lemma~\ref{lem:compression}, applying this process to an intersecting family $\F$ gives an intersecting family $\F^*$ such that $|\F^*|=|\F|$. Moreover, $\F^*$ has the following property: if $T\in\F^*$ contains a non-singleton tile $I$ of length $d$, and $T'$ is obtained from $T$ by replacing $I$ by its $d$ singleton subtiles, then $T'\in\F^*$. 

For a tiling $T$, define its \emph{singleton set} by
\[
    \sigma(T)=\{i\in[n]:[i,i]\in\I(T)\}.
\]

\begin{lemma}\label{lem:singleton-intersecting}
For all distinct $T,U\in\F^*$, we have
\[
    \sigma(T)\cap\sigma(U)\ne\varnothing.
\]
\end{lemma}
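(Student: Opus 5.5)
We argue by contradiction, using only the closure property of $\F^*$ recorded above: if $T\in\F^*$ contains a non-singleton tile $I$, then the tiling obtained by splitting $I$ into singletons is again in $\F^*$. Suppose $T,U\in\F^*$ are distinct with $\sigma(T)\cap\sigma(U)=\varnothing$. Since $\F^*$ is intersecting, $T$ and $U$ share at least one tile, and by assumption every common tile is non-singleton. Among all such counterexample pairs, we choose one minimizing the number of common non-singleton tiles, $|\I(T)\cap\I(U)|$. This quantity is at least $1$.

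Let $I$ be a common tile of $T$ and $U$, of length $d\ge 2$. Let $T'$ be obtained from $T$ by splitting $I$ into $d$ singletons; by the closure property $T'\in\F^*$. We compare $T'$ with $U$. The points of $I$ lie in no singleton of $U$, because $U$ covers $I$ by the single tile $I$. Hence $\sigma(T')\cap\sigma(U)=(\sigma(T)\cup I)\cap\sigma(U)=\varnothing$. Also $T'\ne U$, since $U$ contains $I$ and $T'$ does not. The common tiles of $T'$ and $U$ are exactly those of $T$ and $U$ other than $I$: the new singletons of $T'$ are not tiles of $U$, and every other tile of $T'$ is a tile of $T$. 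So $|\I(T')\cap\I(U)|=|\I(T)\cap\I(U)|-1$. The pair $(T',U)$ therefore has disjoint singleton sets and strictly fewer common tiles, which contradicts minimality.

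To finish, we check the base case. If $|\I(T)\cap\I(U)|=1$, the same step produces $T',U\in\F^*$ that are distinct and share no tile at all. This directly contradicts that $\F^*$ is intersecting, which holds by Lemma~\ref{lem:compression} applied iteratively.

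A cleaner way to organize the argument is to split every common tile of $T$ at once. This is legitimate because the closure property can be applied repeatedly, each application staying inside $\F^*$. The resulting $T''\in\F^*$ is disjoint from $U$, and $T''\ne U$ because $U$ contains a non-singleton common tile that $T''$ does not. This contradicts the intersecting property immediately.

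The only delicate point is checking that splitting creates no new common tile with $U$ and no new shared singleton. Both claims hold because $U$ covers $I$ by the whole tile $I$, so no point of $I$ is a singleton of $U$.
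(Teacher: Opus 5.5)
Your proof is correct and is essentially the paper's argument. The paper splits every common (necessarily non-singleton) tile of $T$ at once, uses the closure property iteratively to get $T'\in\F^*$, and observes that $T'$ is disjoint from $U$; this is exactly your ``cleaner'' version, and your minimal-counterexample version just unrolls that iteration one tile at a time (you also make explicit the check $T''\neq U$, which the paper leaves implicit).
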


\begin{proof}
Suppose instead that $T,U\in\F^*$ are distinct and $\sigma(T)\cap\sigma(U)=\varnothing$.
Since $\F^*$ is intersecting, $T$ and $U$ share at least one tile, and no shared tile can be a singleton.
Refine $T$ by splitting every non-singleton tile that is also a tile of $U$ into singleton tiles and call the resulting tiling $T'$. Then, by iterating the property of $\F^*$ discussed above, $T'\in\F^*$.
But $T'$ and $U$ are disjoint, contradicting that $\F^*$ is intersecting.
\end{proof}

It remains to prove the upper bound for families whose singleton sets are pairwise intersecting.

Let
\[
    H=\Lambda\setminus\{1\}.
\]
If $H=\varnothing$, then there is only one tiling of $[n]$, the all-singleton tiling, and Theorem~\ref{thm:main} is immediate.
Hence we assume $H\ne\varnothing$, and let $d=\min H$.

We refer to an element of $H$ as a heavy length.
A heavy word is a finite (possibly empty) sequence $W=(w_1,\ldots,w_p)$ all of whose entries belong to $H$.
Define the weight of $W$ to be
\[
    \wt(W)=w_1+\cdots+w_p,
\]
where we set the weight of the empty word to be $0$.
For $m\geq 0$, let
\[
    R_m=\{W: W\text{ is a heavy word and }\wt(W)=m\}.
\]
Let $r_m=|R_m|$. Then, $r_0=1$, and $r_m=0$ for $1\leq m<d$.
For $m\geq 1$,
\[
    r_m=\sum_{h\in H}r_{m-h},
\]
with $r_j=0$ for $j<0$.

\begin{lemma}\label{lem:sliding}
For all integers $a\geq 1$ and $m\geq 1$,
\[
    \sum_{j=a}^{a+m-1}r_j
    \geq
    \sum_{j=1}^{m}r_j.
\]
\end{lemma}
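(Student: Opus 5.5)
The plan is to prove the statement by strong induction on $a$, simultaneously for all $m\geq 1$, comparing the two window sums term by term via the recurrence $r_j=\sum_{h\in H}r_{j-h}$. Write $S(a,m)=\sum_{j=a}^{a+m-1}r_j$; the claim is $S(a,m)\geq S(1,m)$. The case $a=1$ is trivial. If $m\leq 0$ we regard both sides as empty sums, equal to $0$.

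Fix $a\geq 2$ and $m\geq 1$, and assume the inequality for all smaller $a'\geq 1$ and all $m$. Every index in both windows is at least $1$, so applying the recurrence to each term and swapping sums gives
\[
S(a,m)=\sum_{h\in H}\sum_{j=a-h}^{a-h+m-1}r_j,\qquad S(1,m)=\sum_{h\in H}\sum_{j=1-h}^{m-h}r_j .
\]
It therefore suffices to show, for each fixed $h\in H$, that
\[
\sum_{j=a-h}^{a-h+m-1}r_j\;\geq\;\sum_{j=1-h}^{m-h}r_j .
\]
Since $h\geq 2$ we have $1-h\leq 0$, and $r_j=0$ for $j<0$. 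Hence the right-hand side equals $0$ if $m<h$, and equals $r_0+S(1,m-h)=1+S(1,m-h)$ if $m\geq h$. The case $m<h$ is trivial, so assume $m\geq h$.

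\emph{Case $a-h\leq 0$.} The left window contains the index $0$. Discarding negative indices, the left-hand side is $1+S(1,a-h+m-1)$. Since $a\geq 1$, we have $a-h+m-1\geq m-h$, and all $r_j\geq 0$. So the left-hand side is at least $1+S(1,m-h)$, as required.

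\emph{Case $a-h\geq 1$.} Split the left window of length $m$ into its first $h$ indices $[a-h,a-1]$ and its remaining $m-h$ indices $[a,a+m-h-1]$. The second part is $S(a,m-h)$ with $a<a$... more precisely it is a window of the form covered by the statement with the same $a$ but smaller length, so it cannot be handled by induction on $a$ directly. Instead I would order the split the other way: take the last $h$ indices $[a+m-2h,a+m-h-1]$ and the first $m-h$ indices $[a-h,a-h+m-h-1]$. The first part is $S(a-h,m-h)$ with $1\leq a-h<a$, so the induction hypothesis gives $S(a-h,m-h)\geq S(1,m-h)$. For the block of $h$ consecutive positive integers, I use the key observation that $r_{kh}\geq 1$ for every $k\geq 1$, witnessed by the heavy word $(h^k)$. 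Any $h$ consecutive positive integers contain a positive multiple of $h$, so that block contributes at least $1$. Adding the two parts gives the left-hand side $\geq 1+S(1,m-h)$. Summing over $h\in H$ completes the induction.

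The only genuinely non-formal step is the lower bound of $1$ on a block of $h$ consecutive positive weights, which the multiples-of-$h$ observation settles. The main thing to get right is the bookkeeping of how each window splits, so that the induction is applied only to starting points $a-h$ with $1\leq a-h<a$.
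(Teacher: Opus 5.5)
Your proof is correct, but it takes a different route from the paper's. You push both windows back through the recurrence $r_j=\sum_{h\in H}r_{j-h}$ and compare them separately for each heavy length $h$. When $a-h\leq 0$, the shifted window contains $0$ and reaches at least as far as $m-h$. When $a-h\geq 1$, you apply strong induction to its first $m-h$ terms, and you use $r_{kh}\geq 1$ (via the word $(h^k)$) to show that the remaining $h$ consecutive positive indices contribute at least the $r_0=1$ on the right.

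The paper never invokes the recurrence. Appending $d=\min H$ gives $r_{j+d}\geq r_j$, which slides the window down to a starting point $b\in[1,d]$. Because $r_1=\cdots=r_{b-1}=0$, the window $[b,b+m-1]$ then already contains every nonzero term of $\sum_{j=1}^{m}r_j$.

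So the paper's argument is two lines and rests only on monotonicity under shifts by $d$ and the initial gap of zeros. Yours needs an induction and a case split, but it never singles out $\min H$: it works directly from the recurrence plus the nonvanishing of $r$ at multiples of each heavy length.

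In a write-up, delete the abandoned first split (the sentence containing ``$a<a$''), since only the second split is actually used.
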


\begin{proof}
Since $d\in H$, appending a heavy length to a word gives an injection from $R_j$ into $R_{j+d}$ for each $j\geq 0$,
and thus $r_{j+d}\geq r_j$.
Write $a=qd+b$ where $1\leq b \leq d$ and $q\geq 0$.
Then
\[
    \sum_{j=a}^{a+m-1}r_j \geq \sum_{j=b}^{b+m-1}r_j.
\]
Also
\[
    \sum_{j=b}^{b+m-1}r_j  +\sum_{j=1}^{b-1}r_j
    =
    \sum_{j=m+1}^{b+m-1}r_j  + \sum_{j=1}^{m}r_j.
\]
Since $b\leq d$, we have $r_j=0$ for all $1\leq j\leq b-1$, and the result follows.
\end{proof}

Our next aim is to construct an injection mapping each heavy word followed by a positive number of $1$'s to a word of the same weight beginning with $1$, such that the image has a disjoint set of singleton tiles. To do so, we will define a graph and apply Hall's theorem.

Fix $L\geq 2$, and define
\[
    X_L=\{(M,W):2\leq M\leq L-1,\ W\in R_M\}
\]
and
\[
    Y_L=\{(s,V):1\leq s\leq L-1,\ V\in R_{L-s}\}.
\]
We need an injection $\psi_L:X_L \to Y_L$
such that, whenever $\psi_L(M,W)=(s,V)$, we have $s\leq M$.
Then replacing $W1^{L-M}$ by $1^sV$ and applying the condition $s\leq M$ ensures that the new singleton run lies inside the original heavy block.

\begin{lemma}\label{lem:internal-code}
For every $L\geq 2$, such an injection $\psi_L$ exists.
\end{lemma}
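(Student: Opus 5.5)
The plan is to apply Hall's theorem to the bipartite graph on $X_L\cup Y_L$ in which $(M,W)\in X_L$ is joined to $(s,V)\in Y_L$ exactly when $s\leq M$. A matching saturating $X_L$ is precisely an injection $\psi_L$ with the required property. If $L=2$ then $X_L=\varnothing$ and there is nothing to prove, so assume $L\geq 3$.

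The key structural observation is that the neighbourhood of $(M,W)$ depends only on $M$:
\[
    N(M,W)=\{(s,V)\in Y_L: s\leq M\}.
\]
These neighbourhoods are nested and increase with $M$. Hence for any nonempty $S\subseteq X_L$, if $M^*$ denotes the largest first coordinate occurring in $S$, then $N(S)=N(M^*,\cdot)$. Also $S$ is contained in $\{(M,W)\in X_L: M\leq M^*\}$. So Hall's condition reduces to checking, for each $M_0$ with $2\leq M_0\leq L-1$, that
\[
    \sum_{M=2}^{M_0} r_M \;\leq\; \bigl|\{(s,V)\in Y_L: s\leq M_0\}\bigr| \;=\; \sum_{s=1}^{M_0} r_{L-s} \;=\; \sum_{j=L-M_0}^{L-1} r_j .
\]

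This is exactly what Lemma~\ref{lem:sliding} provides. Apply it with $a=L-M_0\geq 1$ (valid since $M_0\leq L-1$) and $m=M_0\geq 1$. It gives
\[
    \sum_{j=L-M_0}^{L-1} r_j\geq \sum_{j=1}^{M_0} r_j\geq \sum_{j=2}^{M_0} r_j,
\]
the last step using only $r_1\geq 0$. Hall's theorem then yields the desired matching, and hence $\psi_L$.

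Alternatively, because the neighbourhoods are nested, one could avoid Hall's theorem entirely. Process the elements of $X_L$ in increasing order of $M$ and greedily assign to each an unused element of $Y_L$ with the smallest available $s$. The same counting inequality guarantees that an unused element with $s\leq M$ always remains.

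I do not expect a genuine obstacle. The only point requiring care is recognising that the nested structure reduces Hall's condition to the prefix inequalities above, and that the index shift $j=L-s$ turns the right-hand side into a sliding window of the $r_j$, which is exactly the form Lemma~\ref{lem:sliding} handles.
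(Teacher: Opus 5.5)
Your proposal is correct and matches the paper's proof: the same bipartite graph, the same reduction of Hall's condition to prefix inequalities via the nested neighbourhoods, and the same application of Lemma~\ref{lem:sliding} with $a=L-M_0$, $m=M_0$. The only differences are minor: you use $r_1\geq 0$ where the paper uses $r_1=0$, and you add the vacuous case $L=2$ and a valid greedy alternative to Hall's theorem.
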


\begin{proof}
Consider a bipartite graph with parts $X_L$ and $Y_L$.
Join $(M,W)$ to $(s,V)$ exactly when $s\leq M$.
The neighborhood of a vertex of $X_L$ depends only on $M$, and these neighborhoods are nested as $M$ increases.
Thus Hall's condition reduces to checking that, for every $2\leq m\leq L-1$,
\[
    \sum_{M=2}^{m}r_M
    \leq
    \sum_{s=1}^{m}r_{L-s}.
\]
Since $r_1=0$, the left-hand side is $\sum_{M=1}^{m}r_M$.
The right-hand side is
\[
    \sum_{s=1}^{m}r_{L-s}
    =
    \sum_{j=L-m}^{L-1}r_j.
\]
By Lemma~\ref{lem:sliding}, with $a=L-m$,
\[
    \sum_{j=L-m}^{L-1}r_j
    \geq
    \sum_{j=1}^{m}r_j.
\]
Hall's condition holds, and so the desired injection exists.
\end{proof}

Fix one such injection $\psi_L$ for each $L\geq 2$. If a tiling ends with a heavy block, then we need to use a different encoding for this block. For $M\geq 2$, define
\[
    E_M=\{1^M\}\cup
    \{1^aV1: a\geq 1,\ V\in R_h\text{ for some }h\geq 2,\ a+h+1=M\}.
\]
Every word in $E_M$ has total weight $M$, begins with $1$ and ends with $1$.
Moreover, every word in $E_M$ other than $1^M$ has terminal run of $1$'s of length exactly $1$.

\begin{lemma}\label{lem:final-code}
For every $M\geq 2$, there is an injection from $R_M$ into $E_M$.
\end{lemma}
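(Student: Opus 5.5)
The plan is a pure counting argument. Both $R_M$ and $E_M$ are finite sets, so an injection $R_M\to E_M$ exists as soon as $r_M\le |E_M|$. I will compute $|E_M|$ and then bound $r_M$ from above using the recurrence for $r_m$.

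First I count $E_M$. The word $1^M$ contributes $1$. The words $1^aV1$ with $a\ge 1$, $V\in R_h$, $h\ge 2$ and $a+h+1=M$ are all distinct, since $a$ and $V$ can be read off from the word: $a$ is the length of the initial run of $1$'s, because $V$ has no $1$'s. For a fixed $h$, the value $a=M-1-h$ is forced, and the condition $a\ge 1$ means $h\le M-2$. Hence
\[
|E_M| = 1+\sum_{h=2}^{M-2} r_h = 1+\sum_{j=1}^{M-2} r_j,
\]
where the last equality uses $r_1=0$.

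Next I bound $r_M$. The recurrence gives $r_M=\sum_{h\in H} r_{M-h}$, and every $h\in H$ satisfies $h\ge 2$. So each index $M-h$ is at most $M-2$, and distinct $h$ give distinct indices. I sort the terms by their index:
\begin{itemize}
\item indices in $[1,M-2]$ contribute at most $\sum_{j=1}^{M-2} r_j$, each $r_j$ appearing at most once;
\item the index $0$ occurs at most once (when $M\in H$) and contributes $r_0=1$;
\item negative indices contribute $0$.
\end{itemize}
Therefore
\[
r_M\le 1+\sum_{j=1}^{M-2} r_j=|E_M|.
\]

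Any injection then does the job, for instance one matching the word $M\in R_M$ (if $M\in H$) to $1^M$ and the rest arbitrarily. There is no real obstacle. The only points needing care are that $E_M$ is counted without overcounting (the decomposition $1^aV1$ is unique) and that the term $r_0$ appears at most once, which is exactly what the extra element $1^M$ absorbs.
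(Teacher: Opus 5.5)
Your proposal is correct and is essentially the paper's argument: count $|E_M|=1+\sum_{j=2}^{M-2}r_j$, expand $r_M=\sum_{h\in H}r_{M-h}$, absorb the possible $r_0$ term into $1^M$, and note that the remaining nonzero terms have distinct indices in $[2,M-2]$. You also justify two points the paper leaves implicit: the words $1^aV1$ are pairwise distinct, and distinct $h$ give distinct indices.
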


\begin{proof}
We have
\[
    |E_M|=1+\sum_{j=2}^{M-2}r_j.
\]
On the other hand,
\[
    r_M=\sum_{h\in H}r_{M-h}.
\]
In this sum, the term $r_0=1$, if it appears, is covered by the initial $1$ in the formula for $|E_M|$.
The term $r_1$ is zero.
Every other nonzero term has index between $2$ and $M-2$, and is included in $\sum_{j=2}^{M-2}r_j$.
Thus $|E_M|\geq r_M$, so an injection from $R_M$ into $E_M$ exists.
\end{proof}

Fix one such injection $\eta_M$ for each $M\geq 2$. Let
\[
    B_n=\T_n^\Lambda\setminus A_n.
\]
We construct an injection $\Phi:B_n \to A_n$ such that $\sigma(T)\cap\sigma(\Phi(T))=\varnothing$
 for every $T\in B_n$.

Take $T\in B_n$.
Since $T$ does not begin with a singleton, it has a unique decomposition
\[
    T=W_1 1^{t_1} W_2 1^{t_2}\cdots W_q 1^{t_q},
\]
where each $W_i$ is a nonempty maximal consecutive block of heavy entries, $t_i\geq 1$ for $i<q$ and $t_q\geq 0$.
For every $i$ with $t_i\geq 1$, set
\[
    M_i=\wt(W_i),
    \qquad
    L_i=M_i+t_i.
\]
Since $W_i$ is nonempty and heavy, $M_i\geq 2$, and since $t_i\geq 1$, we have $2\leq M_i\leq L_i-1$.
Thus $(M_i,W_i)\in X_{L_i}$.
Write
\[
    \psi_{L_i}(M_i,W_i)=(s_i,V_i),
\]
and replace the block $W_i1^{t_i}$ by $1^{s_i}V_i$.

If $t_q=0$, then $W_q$ is a final heavy block.
In this case replace $W_q$ by
\[
    \eta_{\wt(W_q)}(W_q).
\]
Concatenating the resulting words gives a tiling of $[n]$.
Define the resulting tiling to be $\Phi(T)$.
The first replacement word begins with $1$, so $\Phi(T)\in A_n$.

\begin{lemma}\label{lem:disjoint-singletons}
For every $T\in B_n$,
\[
    \sigma(T)\cap\sigma(\Phi(T))=\varnothing.
\]
\end{lemma}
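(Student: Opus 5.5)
The plan is to exploit the block structure of $\Phi$: the replacement acts block by block on intervals of $[n]$ that are the same for $T$ and for $\Phi(T)$. So it suffices to check disjointness of singleton positions inside each block.

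First I will fix the intervals. For each $i$, let $J_i$ be the interval of $[n]$ covered by $W_i1^{t_i}$ in $T$ (or by $W_q$ alone if $t_q=0$). The replacement word has the same total weight: $1^{s_i}V_i$ has weight $s_i+\wt(V_i)=s_i+(L_i-s_i)=L_i$, and $\eta_{M}(W_q)$ has weight $M=\wt(W_q)$ since it lies in $E_M$. Hence $\Phi(T)$, read as a concatenation, covers exactly the same intervals $J_i$ with its replacement words. Since the blocks partition $[n]$, we get $\sigma(T)=\bigcup_i(\sigma(T)\cap J_i)$ and likewise for $\Phi(T)$, so it is enough to prove disjointness within each $J_i$.

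\emph{Non-final blocks with $t_i\ge1$.} Let $J_i=[p+1,p+L_i]$. In $T$ the singletons in $J_i$ are exactly the positions $[p+M_i+1,p+L_i]$, because $W_i$ is heavy and is followed by $1^{t_i}$. In $\Phi(T)$ the singletons in $J_i$ are exactly $[p+1,p+s_i]$, because $V_i$ is a heavy word. The defining property of $\psi_{L_i}$ from Lemma~\ref{lem:internal-code} gives $s_i\le M_i$, so these two ranges are disjoint.

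\emph{Final block $t_q=0$.} Let $J_q=[p+1,p+M]$ with $M=\wt(W_q)$. In $T$ there are no singletons in $J_q$, since $W_q$ is entirely heavy, so disjointness holds trivially whatever $\eta_M(W_q)$ is.

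Combining the blocks finishes the argument. The only point needing care is the bookkeeping that the replacement words occupy exactly the same intervals $J_i$; this is the weight check done in the first step. I therefore expect no real obstacle; the content was already packed into the condition $s\le M$ in Lemma~\ref{lem:internal-code}.
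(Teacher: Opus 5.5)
Your proposal is correct and follows the paper's proof. Both compare singleton positions block by block, using $s_i\le M_i$ for each block $W_i1^{t_i}$ and the fact that a final heavy block $W_q$ has no singletons in $T$. Your explicit weight check ($\wt(1^{s_i}V_i)=L_i$ and $\eta_M(W_q)\in E_M$ of weight $M$) just spells out the paper's remark that block endpoints are preserved.
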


\begin{proof}
Consider an internal block $W_i1^{t_i}$ with $t_i\geq 1$.
Within this block, the singleton tiles of $T$ occur in the final $t_i$ relative positions
\[
    M_i+1,M_i+2,\ldots,L_i.
\]
The image block is $1^{s_i}V_i$, whose singleton tiles occur in the first $s_i$ relative positions
\[
    1,2,\ldots,s_i.
\]
By construction of $\psi_{L_i}$, we have $s_i\leq M_i$, so these two sets of relative positions are disjoint.
If $t_q=0$, the final block $W_q$ contains no singleton tiles in $T$, so any singleton tiles introduced by $\eta_{\wt(W_q)}(W_q)$ cannot coincide with singleton tiles of $T$.
The starting and ending positions of each block are preserved, and thus $\sigma(T)\cap\sigma(\Phi(T))=\varnothing$.
\end{proof}

\begin{lemma}\label{lem:Phi-injective}
The map $\Phi:B_n\to A_n$ is injective.
\end{lemma}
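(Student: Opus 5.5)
The plan is to show that $\Phi(T)$ determines $T$ by decoding $\Phi(T)$ block by block from left to right. The key observation is that every replacement word has a rigid shape: it consists of a nonempty run of $1$'s followed by a (possibly empty) heavy word. Internal blocks are replaced by $1^{s_i}V_i$ with $s_i\geq 1$. The final-block replacements $\eta_M(W_q)$ lie in $E_M$, so they are either $1^M$ or $1^aV1$ with $V$ nonempty heavy.

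\emph{Step 1: parsing an image word.} Write $\Phi(T)$ in the form
\[
1^{u_1}V_1'1^{u_2}V_2'\cdots,
\]
with maximal runs of $1$'s and maximal heavy blocks. Suppose first that $t_q\geq 1$, so that every block is internal. Then the concatenation $1^{s_1}V_1\,1^{s_2}V_2\cdots$ has a subtlety: if some $V_i$ is empty, the run $1^{s_i}$ merges with the next run $1^{s_{i+1}}$. So the block boundaries cannot simply be read off from the runs of $\Phi(T)$.

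I would handle this by recovering the boundaries inductively from the left. The point is that the block $W_i1^{t_i}$ of $T$ and its image $1^{s_i}V_i$ occupy the same interval of positions. So it suffices to show that the length $L_i$ of the first remaining block is determined by $\Phi(T)$ restricted to positions after the previously decoded blocks.

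\emph{Step 2: the main obstacle.} This merging issue is the step I expect to be hardest. A cleaner route is to argue injectivity directly. Suppose $\Phi(T)=\Phi(U)$ with $T\neq U$, and take the first block where the decompositions differ.

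If the blocks start at the same position $p$ and have the same length $L$, then the images agree on $[p+1,p+L]$. Hence $\psi_L(M,W)=\psi_L(M',W')$, and injectivity of $\psi_L$ gives equality of the blocks, a contradiction. The same reasoning applies to two final blocks of equal length, using injectivity of $\eta_M$.

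Otherwise one block, say that of $T$, is strictly shorter: it ends at position $p+L<p+L'$. Then position $p+L+1$ begins the next block of $T$, so $\Phi(T)$ has a $1$ there. In $\Phi(U)$, that position lies inside $1^{s'}V'$, which is consistent only if it falls in the initial run, i.e.\ $L+1\leq s'\leq M'$. The idea is to exploit the fact that $T$ and $U$ agree up to position $p$ and compare $T$ with $U$ on the overlap. Position $p+L$ is a singleton of $T$, since it ends $W1^{t}$. But positions $p+1,\ldots,p+M'$ are covered by the heavy block $W'$ of $U$. This by itself is not a contradiction, since $T\neq U$ is allowed there.

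So instead I would use the images. Since $s'\geq L+1$, the word $\Phi(U)$ is all $1$'s on $[p+1,p+L+1]$. Therefore $\Phi(T)$ is all $1$'s on $[p+1,p+L]$, which forces $V=\emptyset$ and $s=L$. But $\psi_L$ requires $s\leq M\leq L-1$, a contradiction. For a final block of $T$ that is shorter than a block of $U$, $T$ ends there, so this case cannot arise. If it is the final block of $U$ that is longer, then $\eta$ gives either $1^{M'}$ or $1^aV1$. In both cases the same argument applies: the image of $T$'s block would have to be all $1$'s of full length $L$, which is impossible.

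\emph{Step 3: conclusion.} The argument therefore reduces to two facts. First, no image $\psi_L(M,W)=(s,V)$ has $V=\emptyset$; this holds because $s\leq L-2<L$, so $V\in R_{L-s}$ is nonempty. Second, every image block begins with $1$. Together these show that block lengths are forced, and injectivity of the $\psi_L$ and $\eta_M$ finishes the proof.
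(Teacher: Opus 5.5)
Your first-differing-block argument is a legitimate alternative to the paper's direct decoding, but two cases are not covered as written, and in one of them the step you state fails.

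First, in the equal-length case you handle two $\psi$-blocks and two $\eta$-blocks, but not the mixed case. There the common block is the last block of both tilings, with $t\geq 1$ in $T$ (image $1^{s}V$) and $t'=0$ in $U$ (image $\eta_L(W')$). Injectivity of $\psi_L$ and of $\eta_L$ says nothing here. What you need is that the two kinds of image word never coincide. This holds because $1^{s}V$ ends with a heavy tile ($V\neq\varnothing$ by your Step 3), while every word of $E_L$ ends with $1$.

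Second, take the unequal-length case with $\eta_{L'}(W')=1^{a}V'1$. Your claim that ``the same argument applies'' is false as stated. The singleton of $\Phi(T)$ at position $p+L+1$ can be matched either inside the initial run $1^{a}$ or by the terminal $1$ of $1^{a}V'1$, and the latter happens exactly when $L'=L+1$. In that situation $\Phi(U)$ restricted to $[p+1,p+L]$ is $1^{a}V'$, which is not all $1$'s, so your reasoning yields no contradiction. The missing observation is that the next block of $T$, which starts at $p+L+1$, contains a nonempty heavy word and so has length at least $2$. Hence $p+L+2\leq n=p+L'$, so position $p+L+1$ lies in $1^{a}$, and your argument goes through. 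With these two patches your proof is complete.

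For comparison, the paper avoids this case analysis by parsing $\Phi(T)$ directly. Every $\psi$-image $1^{s}V$ has $s\geq 1$ and $V\neq\varnothing$, so the image blocks are exactly the consecutive pairs of a maximal run of $1$'s followed by a maximal heavy block. Thus the merging worry in your Step 1 never arises, precisely because of your own Step 3 observation. A final tile equal to $1$ signals a terminal $\eta$-image. That image is peeled off using the length of the terminal run of $1$'s, which relies on the fact that every word of $E_M$ other than $1^M$ has terminal run exactly $1$.
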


\begin{proof}
It is enough to show that for each $Y$ in the image of $\Phi$, the preimage of $Y$ can be determined uniquely.
Suppose $Y$ ends with a heavy length, and take the unique decomposition
\[
    Y=1^{s_1}V_1 1^{s_2}V_2\cdots 1^{s_q}V_q,
\]
where the $V_i$ are nonempty maximal heavy words. Let
$L_i=s_i+\wt(V_i)$. Since $\psi_{L_i}$ is injective, $(s_i,V_i)$ has a unique preimage
$(M_i,W_i)$, and the original block is $W_i1^{L_i-M_i}$.

Now suppose that $Y$ ends with $1$. Then the last heavy block of the
preimage was replaced by its image under one of the maps $\eta_M$.
Let $\tau$ be the length of the terminal run of $1$'s in $Y$.
If $\tau\geq 2$, this final $\eta$-image must be $1^\tau$, since every
other word in every $E_M$ has terminal run of $1$'s of length exactly $1$. Thus, by injectivity $\eta_\tau$, the final heavy block is recovered uniquely.
If $\tau=1$, this final $\eta$-image is of the form $1^aV1$, where $V$ is the maximal heavy block immediately before the last
$1$, and $1^a$ is the maximal singleton run immediately before $V$.
Then $M=a+\wt(V)+1$, and injectivity of $\eta_M$ recovers the final
heavy block.

After removing this final $\eta$-image, the remaining prefix is either
empty or ends with a heavy length, so it is decoded by the first paragraph.
Hence $Y$ determines $T$ uniquely, and $\Phi$ is injective.
\end{proof}

\begin{proof}[Proof of Theorem~\ref{thm:main}]
Let $\F\subseteq\T_n^\Lambda$ be intersecting.
If $\Lambda=\{1\}$, then there is only one tiling and the result is immediate, so assume $H\ne\varnothing$.
By Lemma~\ref{lem:compression} and the argument in the paragraph following it, there exists an intersecting family $\F^*$ such that $|\F^*|=|\F|$,
and such that replacing any non-singleton tile in a tiling of $\F^*$ with singleton tiles also yields a tiling in $\F^*$.
By Lemma~\ref{lem:singleton-intersecting}, the singleton sets in $\F^*$ are pairwise intersecting.

The map $\Phi:B_n\to A_n$ is injective by Lemma~\ref{lem:Phi-injective}, and by Lemma~\ref{lem:disjoint-singletons} it pairs every $T\in B_n$ with a tiling $\Phi(T)\in A_n$ whose singleton set is disjoint from $\sigma(T)$.
Hence $\F^*$ cannot contain both members of any pair
\[
    \{T,\Phi(T)\},
    \qquad T\in B_n.
\]
These pairs are disjoint, and every unpaired tiling lies in $A_n$.
Therefore
\[
    |\F|=|\F^*|
    \leq |A_n|
    =a_{n-1}^\Lambda,
\]
and the proof of Theorem~\ref{thm:main} is complete.
\end{proof}

\section{Acknowledgements}
We thank the participants in the 2019 Eml\'ekt\'abla workshop in Hungary, where the author proposed this problem, for several discussions on the topic. We also acknowledge the use of ChatGPT 5.5 as a writing and brainstorming aid in developing parts of the proof, in particular the formulation needed to apply Hall’s theorem.

\end{document}